\documentclass[11pt]{article}
\usepackage{fullpage}
\usepackage{amsmath,amssymb,amsthm}
\usepackage[pdfborder={0 0 0},colorlinks=true,linkcolor=blue,citecolor=blue,urlcolor=blue]{hyperref}
\usepackage[
backend=biber,
style=alphabetic,
sorting=nyt,
maxnames=99,
doi=false,
url=false,
isbn=false
]{biblatex}
\usepackage{caption}
\usepackage{cleveref}
\usepackage{comment}
\usepackage{tikz}
\usetikzlibrary{shapes.misc}

\newtheorem{lemma}{Lemma}[section]

\newtheorem{theorem}[lemma]{Theorem}
\theoremstyle{definition}

\newtheorem{example}[lemma]{Example}

\newcommand{\ZZ}{\mathbb{Z}}
\newcommand{\Exp}{\operatorname{Exp}}

\title{Microscopic sorting in a multispecies exclusion process}
\author{
Balázs Maga \and Bálint Tóth
}
\date{}

\begin{document}

\maketitle

\begin{abstract}
    We study a one-dimensional multispecies TASEP model with each particle type having an associated speed parameter and faster particles overtaking slower ones with the rate being the positive part of their speed difference. After expressing the sorting time of a finite system given a reverse-speed initial condition as the maximum of sorting times for all different fast-slow cuts, we prove that forbidding fast-fast and slow-slow overtakes does not alter the law of the sorting time. This allows for an LPP formulation and an analytic expression of the sorting time.

    \noindent \textbf{ Keywords:} multispecies TASEP; last passage percolation; absorbing times
\end{abstract}

\begingroup
\renewcommand{\thefootnote}{}
\footnotetext{%
\hspace{-7mm} HUN-REN Alfréd Rényi Institute of Mathematics,
Reáltanoda utca 13--15, 1053 Budapest, Hungary.\\
E-mail: \texttt{magab@renyi.hu}, \texttt{toth.balint@renyi.hu}. \\
Supported by the
NKFIH Grant no. 152922, funded by the Ministry of Innovation
and Technology of Hungary from the National Research, Development and
Innovation Fund.
}
\addtocounter{footnote}{-1}
\endgroup

\section{Introduction}

Last-passage percolation (LPP) is closely connected to a number of interacting particle systems and stochastic sorting models. A classical example is the correspondence between the totally asymmetric simple exclusion process (TASEP) and exponential LPP, going back to Rost \cite{Rost1981}; this connection has played a central role in the exact analysis of TASEP and related growth models, notably in Johansson's work \cite{Johansson2000}. More recently, connections between LPP and finite particle systems with sorting dynamics have also appeared, in particular for the oriented swap process \cite{BisiCundenGibbonsRomik2022}. 

In this paper we study a multispecies TASEP, akin to the one studied in \cite{Cantini2016, KimWilliams2023}. We arrange $N$ particles along an interval each of them having a type in $\{0,\ldots,K-1\}$, with associated speed parameter
\[
    v_0<v_1<\cdots<v_{K-1}.
\]
Throughout $K\geq 2$ is fixed. Whenever neighboring particles have types $i$ and $j$ in this order, they
interchange at rate $v_i-v_j$ if $i>j$.  Interchanges in the other direction
are not allowed. Drawing motivation from the studies concerning the ordered swap process \cite{AngelHolroydRomik2009} and random sorting networks \cite{AngelHolroydRomikVirag2007}, we focus on the balanced reverse-speed initial configuration
\[
    (K-1)^n(K-2)^n\dots 0^n,
\]
where $n=N/K$. (We tacitly assume $K|N$.)
We are interested in the sorting time $T_N$ as $N\to\infty$, the time at which the process reaches the completely sorted configuration $0^n1^n\dots(K-1)^n$.

Our motivation for considering this rate structure rather than that of the oriented swap process, is that as established by \cite{TothValko2003} (considering interacting particle systems in substantially higher generality), this Markov process admits a hydrodynamic limit, being valid even beyond shock formation for $K=3$ \cite{FritzToth2004}, in which case the limit is the Leroux system. For general $K$, the limiting system of conservation laws was analyzed from a PDE point of view in \cite{schutz2026hydrodynamicsboundaryinducedphasetransitions}. Investigating the sorting problem from this perspective is left for future work.

\subsection{The connection to LPP}

Let us consider the $K=2$ case as a simple motivating example. Label the fast particles from
the contact point outwards by $1,\ldots,n$, and do the same for the slow
particles.  If $L^{(a,b)}$ is the time at which fast particle $a$ overtakes
slow particle $b$, then
\[
    L^{(a,b)}=\max\{L^{(a-1,b)},L^{(a,b-1)}\}+\tau_{(a,b)},
    \qquad \tau_{(a,b)}\sim\Exp(x_a+y_b)\ \text{are independent},
\]
with boundary values $L^{(a, 0)}=L^{(0, b)}=0$ for every positive integer $a, b$.  Thus $L^{(n,n)}$ is a
two-dimensional last-passage time, in particular, the usual LPP shape theorem shows $\frac{T_N}{N}\xrightarrow{a.s.} \frac{2}{v_1-v_0}$.

The previous formulation as an LPP problem does not directly generalize for $K\geq 3$. 
Still one is able to arrive at a similar problem. Fix $m\in\{1,\ldots,K-1\}$. The \emph{$m$-slow} particles are those of types
$0,\ldots,m-1$, and the remaining particles are called \emph{$m$-fast}. (As within our arguments, $m$ is mostly fixed, we often simply use the terms \emph{slow} and \emph{fast}.) Let $T_{N,m}$
be the time at which every fast particle has passed every slow particle. We refer to this event as $m$-sorting.
Then
\begin{equation}\label{eq:total-time}
    T_N=\max_{1\leq m<K}T_{N,m}.
\end{equation}
The first naive idea now is to proceed as above: label both the slow and the fast particles outwards from the contact point and
hence turn the problem until complete sorting into an LPP problem. However, there are two difficulties to handle:
\begin{itemize}
    \item Fast-fast and slow-slow overtakes can happen, thus $m$-sorting actually cannot be cast
    as an LPP problem.
    \item Even if we rule out fast-fast and slow-slow overtakes, we end up with an inhomogeneous LPP instead of a homogeneous one.
\end{itemize}

The second issue is more of a technical inconvenience: while inhomogeneity makes matters more complicated, results about the passage time asymptotics of 
inhomogeneous LPPs are available. The first issue is more fundamental. Overtakes among particles on the same side of the cut continually rearrange the parameters encountered by subsequent fast--slow crossings, so the direct LPP representation above breaks down. In contrast with the homogeneous oriented swap process, the binary fast--slow projection is not itself Markovian: these internal rearrangements change the identities, and hence the interchange rates, of particles subsequently meeting across a fast--slow interface.

\subsection{The main results}

Our first theorem asserts the somewhat surprising result
that fast-fast and slow-slow overtakes actually do not affect the distribution of the sorting time. To make this more explicit, once $m$ is fixed we define the \emph{suppressed model} by forbidding slow (resp. fast) particles overtaking other slow (resp. fast) particles. This suppression freezes the order of both slow and fast particles, implying that starting from the reverse speed initial condition, there is a unique attainable absorbing state, in which the configuration is $m$-sorted. We denote the corresponding sorting time by $T_{N, m}^*$.

\begin{theorem} \label{thm:fast-fast_slow-slow_makes_no_difference}
    With the notation above, $T_{N, m}^*\stackrel{d}{=} T_{N, m}$. 
    
    More generally, the same distributional identity stands for arbitrary overtake rates
    between pairs of fast and pairs of slow particles.
\end{theorem}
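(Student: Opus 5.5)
The plan is to use the additive structure of the fast--slow rates and prove an intertwining between the full model and a randomised version of the suppressed model. Fix $m$ and a threshold $c\in(v_{m-1},v_m)$. For a fast type $i\geq m$ put $x_i=v_i-c>0$, and for a slow type $j<m$ put $y_j=c-v_j>0$. Every fast--slow interchange rate then splits additively as $v_i-v_j=x_i+y_j$. In the suppressed model the relative order of fast particles and of slow particles is frozen. So, as in the $K=2$ discussion, $T^*_{N,m}$ is a last-passage time $L^{(n(K-m),\,nm)}$ with independent weights $\tau_{(a,b)}\sim\Exp(x_{\sigma(a)}+y_{\rho(b)})$, where $\sigma,\rho$ list the types of the fast and slow particles outwards from the contact point.

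The first step is a local computation explaining why the internal order should not matter. Take two adjacent fast particles $i>j$ in front of a slow particle $k$, with an arbitrary fast--fast overtake rate $r$, and look at the time at which both have passed $k$.
\begin{itemize}
\item From $ijk$ the chain either goes to $ikj$ at rate $x_j+y_k$, after which an $\Exp(x_i+y_k)$ time remains, or it goes to $jik$ at rate $r$.
\item From $jik$ it needs an $\Exp(x_i+y_k)$ time followed by an $\Exp(x_j+y_k)$ time.
\end{itemize}
Since the sum of two independent exponentials does not depend on their order, and by memorylessness, the total crossing time is $\Exp(x_i+y_k)*\Exp(x_j+y_k)$ for every $r$. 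The same holds for two slow particles in front of a fast one. At the LPP level this is the statement that the law of $L^{(p,q)}$ with weights $\Exp(x_a+y_b)$ is symmetric under permuting the $x$'s, and separately the $y$'s. I would prove this symmetry via the Burke property or Weber's interchangeability of $\cdot/M/1$ servers in tandem: each row of the LPP is such a server. In that language a fast--fast overtake is a random transposition of two adjacent servers, and the claim is that it does not change the departure law.

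The second step globalises this through an intertwining. Let $\pi$ map a full configuration to its fast/slow pattern in $\{F,S\}^N$ together with the multisets of fast and slow types still ahead of or behind each interface. I would construct a Markov kernel $\Lambda$ from suppressed-model states to full-model states, namely a suitable mixture over internal orderings consistent with the crossing history. The goal is the relation $\mathcal L\Lambda=\Lambda\mathcal L^*$, where $\mathcal L$ is the full generator (with arbitrary same-side rates) and $\mathcal L^*$ is the suppressed generator. Given this, and noting that the absorbing $m$-sorted event is $\pi$-measurable and preserved by $\Lambda$, the hitting times of that event agree in law. The reverse-speed initial condition is used here: there both chains start in a single state that $\Lambda$ maps to a point mass, so the initial laws match. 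I would verify the intertwining by checking the generator identity locally on each triple of adjacent sites, which reduces to the two-server computation above together with a braid (Yang--Baxter) relation for three consecutive same-side particles. The latter should follow because the rates $x_i+y_k$ form a free-fermionic, additive rate structure.

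The main obstacle is the second step: choosing $\Lambda$ correctly. A naive lumping by the $F/S$ pattern fails, as the introduction notes, because that projection is not Markov. I would first try taking $\Lambda$ to be the law of the full configuration obtained from the LPP in a Burke-type construction: the rows are listed in their current server order, and adjacent servers are resampled by the $\Exp(x_i)$/$\Exp(x_j)$ interchange. I would then check the generator identity on the finitely many local patterns. If that becomes unwieldy, the fallback is induction on the number of same-side overtakes, using the local lemma and the strong Markov property to show that each such overtake leaves the conditional law of the remaining sorting time unchanged.
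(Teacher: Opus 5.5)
Your main route cannot work, and the obstruction is structural, not a matter of finding the right kernel. Suppose $\Lambda(s,\cdot)$ is supported on the fibre $\pi^{-1}(s)$ and $\Lambda(s_0,\cdot)=\delta_{\sigma_0}$ at the reverse-speed state. Then an intertwining with the suppressed generator would, by the Rogers--Pitman criterion, make the projected full process Markov with generator $\mathcal L^*$. In other words, the whole fast/slow trajectory of the full model would have the law of the suppressed one. That is false.

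Take $K=3$, $m=1$, one particle of each type, started from $210$. In the suppressed model the first fast--slow interchange happens after an $\Exp(v_1-v_0)$ time. In the full model the internal swap $210\to 120$ (rate $v_2-v_1$) can come first, after which the fast--slow rate becomes $v_2-v_0$. So the first fast--slow interchange is stochastically earlier. Equivalently, with positive probability the full chain is at $120$, which has the same $\pi$-value as $210$, before any fast--slow move; this forces $\Lambda(s_0,120)>0$. Only the absorption time is invariant in law; your two-particle computation is an instance of exactly that. This is why the paper never compares the processes themselves.

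Your fallback is the right strategy and is essentially the paper's proof. The paper realises same-side overtakes by independent Poisson clocks on pairs and conditions on their ring times, which form a scenario independent of the fast--slow clocks. It then inducts over the swap times using the Markov property and integrates the scenario out. Your strong-Markov version also works if you compare the model allowing the first $k$ same-side swaps with the one allowing $k-1$, and let $k\to\infty$.

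However, the step that carries all the weight is missing. At the moment of a same-side swap the configuration $\sigma$ is arbitrary: typically not admissible, with a general Ferrers boundary $\gamma_\sigma$. What you need is $T^*_\sigma\stackrel{d}{=}T^*_{\sigma'}$ whenever $\sigma'$ differs from $\sigma$ by exchanging two adjacent same-side particles. In LPP terms, this is invariance of the boundary-to-point time $\max_{u\in\gamma_\sigma^+}L_u^{(N-m,m)}$ under permuting the $x$'s within a column block, or the $y$'s within a row block.

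None of your ingredients gives this. Your local computation covers only a single slow particle. Symmetry of the point-to-point law of $L^{(p,q)}$ from $(1,1)$ under permuting all the $x$'s does not imply the block statement. Classical Weber interchangeability does not give it either, since it assumes service rates depending only on the server, whereas $\Exp(x_a+y_b)$ depends on both server and customer.

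The paper obtains exactly this lemma from \cite[Theorem~3.1]{BatesEmrahMartinSeppalainenSorensen2026}. That theorem says permuting column parameters on $[a,a']$ preserves the joint law of $\{L_p^q:\ p_1,q_1\notin(a,a')\}$, and the points of $\gamma_\sigma^+$ together with the endpoint $(N-m,m)$ satisfy this constraint. Without that input, or a proof of a Weber-type interchangeability for additive rates $x_a+y_b$ with arbitrary input, your induction has nothing to apply at each swap.
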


This invariance relies on a recent result on inhomogeneous LPPs \cite{BatesEmrahMartinSeppalainenSorensen2026}, establishing permutation invariance for LPP parameters.

Using \Cref{thm:fast-fast_slow-slow_makes_no_difference}, we can cast the study of $T_{N, m}$ as an LPP problem and we prove the following:

\begin{theorem} \label{thm:sorting_time}
    With the notation above, considering the independent coupling for $N=1, 2, \dots$,
    \begin{equation}\label{eq:main1}
    \frac{T_{N, m}}{N}\xrightarrow{a.s.}\frac{1}{K}\left(\sum_{i=0}^{m-1}\frac{1}{z-v_i}+\sum_{i=m}^{K-1}\frac{1}{v_i-z}\right) \quad m=1,2, \dots, K-1,
    \end{equation}
    and hence taking maximum,
    \begin{equation}\label{eq:main2}
    \frac{T_N}{N}\xrightarrow{a.s.}
    \frac{1}{K}\max_{1\leq m<K}\inf_{v_{m-1}<z<v_{m}}
    \left(
        \sum_{i=0}^{m-1}\frac{1}{z-v_i}+\sum_{i=m}^{K-1}\frac{1}{v_i-z}
    \right).
    \end{equation}
\end{theorem}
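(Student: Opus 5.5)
By Theorem~\ref{thm:fast-fast_slow-slow_makes_no_difference}, it suffices to prove \eqref{eq:main1} for $T_{N,m}^*$. Almost sure convergence under the independent coupling will be obtained from concentration at each fixed $N$ plus Borel--Cantelli. Note that \eqref{eq:main1} as displayed must be read with $z=z_m$ the minimiser over $(v_{m-1},v_m)$ of the bracket, as in \eqref{eq:main2}. The bracket is strictly convex on that interval and tends to $+\infty$ at both endpoints, so the minimiser exists and is unique.

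\textbf{Step 1: LPP representation.} In the suppressed model the slow particles keep their order, and so do the fast ones. Label the fast particles $a=1,\dots,n(K-m)$ outwards from the contact point, with speeds $x_a$, and the slow ones $b=1,\dots,nm$ outwards, with speeds $y_b$. Exactly as in the $K=2$ example, fast particle $a$ can overtake slow particle $b$ only after it has overtaken $b-1$ and after $a-1$ has overtaken $b$. Once both events have happened, $a$ and $b$ are adjacent. The memoryless property then gives
\[
L^{(a,b)}=\max\{L^{(a-1,b)},L^{(a,b-1)}\}+\tau_{(a,b)},\qquad \tau_{(a,b)}\sim\Exp(x_a-y_b),
\]
with the $\tau_{(a,b)}$ independent and $T^*_{N,m}=L^{(n(K-m),nm)}$. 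The speed profile is deterministic and piecewise constant: $x_a=v_{K-1-\lfloor (a-1)/n\rfloor}$, which runs from $v_m$ nearest the contact point up to $v_{K-1}$, and the slow speeds run symmetrically from $v_{m-1}$ down to $v_0$. So this is an inhomogeneous exponential LPP with rates $x_a-y_b$. Since the rate is the difference of a row parameter and a column parameter, this is the solvable ``$\Exp(a_i+b_j)$'' model with $a_i=x_a$ and $b_j=-y_b$.

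\textbf{Step 2: Limit shape.} For the inhomogeneous exponential LPP with rates $a_i+b_j$, the known variational formula is
\[
\frac{1}{N}L \to \inf_{z}\Big(\int\frac{d\alpha(s)}{a(s)-z}+\int\frac{d\beta(s)}{z+b(s)}\Big),
\]
with the infimum over $-\min b<z<\min a$. It can be proved via stationary (Burke-type) LPP with boundary weights $\Exp(a_i-z)$ and $\Exp(z+b_j)$, as in Emrah and Seppäläinen--Sorensen. Here $a(s)$ and $b(s)$ are the macroscopic parameter profiles and $\alpha,\beta$ the corresponding measures. Our profile has $n$ copies of each value. Hence $\alpha$ puts mass $1/K$ on each $v_i$ with $i\ge m$, and $\beta$ puts mass $1/K$ on each $-v_i$ with $i<m$. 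The formula then reduces exactly to $\frac1K\inf_{v_{m-1}<z<v_m}\big(\sum_{i<m}\frac{1}{z-v_i}+\sum_{i\ge m}\frac{1}{v_i-z}\big)$.

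The upper bound follows from coupling with the stationary model, taking a union over exit points and optimising over $z$. The lower bound uses superadditivity across the $K-m$ times $m$ homogeneous blocks. I would also invoke the permutation invariance of \cite{BatesEmrahMartinSeppalainenSorensen2026} to reorder the blocks freely, so that a convenient ordering can be used in the proof. This step is the main obstacle. I would try first to quote an existing theorem for piecewise-constant parameters directly, and otherwise to make the block/stationary argument rigorous.

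\textbf{Step 3: Almost sure convergence and the maximum.} Exponential LPP satisfies Gaussian-type concentration at scale $N$; for example, Talagrand-type bounds or moment bounds on the weights give $P(|L-EL|>\epsilon N)$ summable in $N$. Together with the convergence of expectations from Step 2, Borel--Cantelli upgrades this to almost sure convergence under the independent coupling. Finally, \eqref{eq:main2} follows from \eqref{eq:total-time}, since a maximum of finitely many almost surely convergent sequences converges almost surely to the maximum of the limits.
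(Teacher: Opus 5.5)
Your route differs from the paper's, and it can be completed, but as written its decisive step is left open.

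The paper never works with the $n$-dependent block environment. It uses the invariance of \cite{BatesEmrahMartinSeppalainenSorensen2026} to interleave the parameters periodically, $x_a=v_{m+((a-1)\bmod(K-m))}$, $y_b=-v_{m-1-((b-1)\bmod m)}$. One fixed environment then gives the law of $T_{N,m}$ for every $N$ at once. The paper then quotes the fixed-environment shape theorem \cite[Theorem~3.2]{EJS2021}, and transfers almost sure convergence through the distributional identity using the summable tail bounds behind that theorem.

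That interleaving is what makes ``quoting an existing theorem directly'' legitimate. Your block profile changes with $n$, so a fixed-environment result does not apply to it as stated, and reordering whole blocks does not cure this. Your fallback computes $\lim EL/N$ by hand for the triangular array, then uses concentration plus Borel--Cantelli. It avoids the single-environment issue entirely and needs only Burke's property and the homogeneous shape theorem \cite{Rost1981}. The paper's route is shorter, but it relies on the external shape theorem (and on its proof giving summable tails) plus a second use of the permutation invariance.

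To close your Step 2, write $F(z)=\sum_{j<m}\frac{1}{z-v_j}+\sum_{i\ge m}\frac{1}{v_i-z}$ for $z\in(v_{m-1},v_m)$.

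\emph{Upper bound.} No union over exit points is needed. Let $G^z$ be the stationary LPP with boundary weights $\Exp(v_i-z)$ on the horizontal axis and $\Exp(z-v_j)$ on the vertical axis. Then $L\le G^z$ pointwise, and $EG^z=nF(z)$.

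\emph{Lower bound.} Let $z^*$ be the minimiser, so $\sum_{i\ge m}(v_i-z^*)^{-2}=\sum_{j<m}(z^*-v_j)^{-2}$. Build a staircase parametrised by $\theta$ that moves horizontally at speed $(v_i-z^*)^2$ inside the column block of $v_i$, and vertically at speed $(z^*-v_j)^2$ inside the row block of $v_j$. The critical-point equation says exactly that both coordinates finish at the same $\theta$. A straight piece of $\theta$-length $\Delta\theta$ in block $(i,j)$ contributes asymptotically
\[
n\,\frac{\bigl(\sqrt{\Delta s}+\sqrt{\Delta t}\bigr)^2}{v_i-v_j}=n\,\Delta\theta\,\bigl((v_i-z^*)+(z^*-v_j)\bigr).
\]
Summing over the pieces gives exactly $nF(z^*)$, matching the upper bound. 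This works for any block order, so the reordering you mention is unnecessary.

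\emph{Concentration.} Use Talagrand's inequality for the exponential product measure; the bounded-variable version does not apply. It gives $P(|L-EL|>\epsilon N)\le Ce^{-c\epsilon^2N}$, which is summable.

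\emph{A minor slip.} Your formula $x_a=v_{K-1-\lfloor(a-1)/n\rfloor}$ contradicts your own description; it should be $v_{m+\lfloor(a-1)/n\rfloor}$. This is harmless, since the corner passage time is symmetric in the parameters.
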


Observe that if \eqref{eq:main2} has a unique maximizing $m=m_0$, then with probability $1-o(1)$ the last overtake is a particle of type $m_0$ overtaking a particle of type $m_0-1$. Indeed, only such overtakes are prerequisites of $m_0$-sorting but no other $m$-sorting, thus they constitute the only possible source of the asymptotically almost sure inequality $T_{N, m_0}>\max_{m\neq m_0}T_{N, m}$.

\begin{example}
    For even $K$, let the set of speeds be $\{i/K\}_{i=1}^{K}$. Then the infimum is maximal for $m=K/2$ in which case it is attained at $z=\frac{K-1}{2}$, and we obtain
    $$\frac{T_N}{N}\to 2\sum_{i=0}^{K/2-1}\frac{1}{i+1/2}\sim_{K\to\infty} 2\log(K/2).$$
\end{example}
\begin{example}
    For even $K$ and $\beta>1$, let the set of speeds be $\{\pm \beta^i\}_{i=0}^{K/2-1}$. The infimum is maximal again for $m=K/2$, in which case it is attained at $z=0$, and we obtain
    $$\frac{T_N}{N}\to 2\sum_{i=0}^{K/2-1}\beta^{-i}\to_{K\to\infty}2\left(1-\frac{1}{\beta}\right).$$
\end{example}

\section{The suppressed model}

First we need to understand better the suppressed model. We work in the slightly more general setting where individual particles may have distinct speeds: in this case $m$ is simply the number of the slow particles while $N-m$ is the number of the fast particles. Denote by $u_1, \dots, u_{N-m}$ the speed of the fast particles, by $w_1, \dots w_m$ the speed of the slow ones. Set
\[
x_a=u_a\quad (a=1,2, \dots, N-m),\qquad y_b=-w_b\quad (b=1, 2, \dots, m).
\]
Then the interchange rate of fast particle $a$ and slow particle $b$ is
\[
x_a+y_b=u_a-w_b.
\]

In the suppressed model, the initial configuration uniquely determines the absorbing state the model ends up in. We say that a configuration is {\it admissible} if it can be attained from the reverse-speed initial configuration by performing a number of permitted overtakes.
The absorption time from an
admissible initial configuration $\sigma$ is denoted by $T_\sigma^*$.

In the reverse-speed configuration, let $L^{(a,b)}$ be the time at which fast particle
$a$ overtakes slow particle $b$.  These particles become neighbors after all
required overtakes indexed by $(a-1,b)$ and $(a,b-1)$ have taken place.
Consequently,
\begin{equation}\label{eq:lpp-recursion}
    L^ {(a,b)}=\max\{L^ {(a-1,b)},L^{(a,b-1)}\}+\tau_{(a,b)},
    \qquad \tau_{(a,b)}\sim\Exp(x_a+y_b)\ \text{are independent},
\end{equation}  
with boundary values $L^{(a, 0)}=L^{(0, b)}=0$ for every positive integer $a, b$.
Equivalently,
\begin{equation}\label{eq:lpp}
    L^{(a,b)}=\max_{\Gamma:(1,1)\nearrow(a,b)}
    \sum_{u\in\Gamma}\tau_{u}.
\end{equation}
Here the maximum is over up-right lattice paths. We slightly extend this notation:
for a pair of distinct points $p=(p_1, p_2),q=(q_1, q_2)\in \ZZ_{>0}^2$ with $p_1\leq q_1, p_2\leq q_2$, the last passage time $L_p^q$ between $p$ and $q$ is
\begin{equation}\label{eq:lpp2}
    L_p^q=\max_{\Gamma:p\nearrow q}
    \sum_{u\in\Gamma}\tau_{u}.
\end{equation}
so in particular,
$$L^ {(a, b)}=L_{(1,1)}^{(a,b)}.$$
That is, in the reverse-speed configuration the absorption time we are interested in is $L^{(N-m, m)}$.

For a general admissible initial configuration $\sigma$, some of the required
overtakes have already occurred. The vertices corresponding to these commited overtakes form a (bottom-up) Ferrers diagram, i.e., the dot version of a Young diagram (\Cref{fig:ferrers}). (\Cref{fig:ferrers}). This region is bounded by a down-right lattice curve $\gamma=\gamma_{\sigma}$ connecting $(0, m)$ to $(N-m, 0)$, which can be encoded by a \emph{coding string} $\{D, R\}^N$, telling that starting to draw the curve from $(0, m)$, which steps go to the right and which ones go downwards. In particular, for the reverse-speed configuration, the corresponding curve consists of the segments $[(0, m), (0,0)]$ and $[(0, 0), (N-m, 0)]$, and the corresponding string is $D^{m}R^{N-m}$. We say that $u$ is a \emph{$DR$-corner of $\gamma$} if the preceding entry of the coding string $D$, while the subsequent one is $R$.
Finally, let $\gamma^{+}=\gamma_\sigma^+$ denote the set of lattice vertices immediately northeast of the $DR$-corners of $\gamma_\sigma$. These correspond to the possible "first overtakes" that can happen once we start the dynamics from $\sigma$. In this case, we have
\begin{equation}\label{eq:boundary-lpp}
    T_\sigma^*=\max_{u\in \gamma_\sigma^+}L_u^{(N-m, m)}
\end{equation}

\begin{figure}[ht]
    \centering
    \begin{tikzpicture}[scale=1.5]
    \def\r{6} 
    \def\s{4} 

    \fill[blue!15]
        (0,0) --
        (0,4) --
        (1,4) --
        (1,3) --
        (2,3) --
        (2,2) --
        (3,2) --
        (3,1) --
        (4,1) --
        (6,1) --
        (6,0) --
        cycle;

    \draw[gray!50, step=1] (0,0) grid (\r,\s);

    \draw[thick] (0,0) rectangle (\r,\s);

    \foreach \x in {0,...,\r}
        \foreach \y in {0,...,\s}
            \fill (\x,\y) circle (1.2pt);

    %
    \draw[ultra thick, red]
        (0,4) --
        (1,4) --
        (1,3) --
        (2,3) --
        (2,2) --
        (3,2) --
        (3,1) --
        (4,1) --
        (5,1) --
        (6,1) --
        (6,0);

    \node[red, above right] at (3,2) {$\gamma$};

    \fill[red] (1,3) circle (2pt);
    \fill[red] (2,2) circle (2pt);
    \fill[red] (3,1) circle (2pt);

    \node[cross out, draw=green, line width=1pt, minimum size=5pt, inner sep=0pt] at (2,4) {};
\node[cross out, draw=green, line width=1pt, minimum size=5pt, inner sep=0pt] at (3,3) {};
\node[cross out, draw=green, line width=1pt, minimum size=5pt, inner sep=0pt] at (4,2) {};

    \node[below left] at (0,0) {\small $(0,0)$};
    \node[below] at (\r,0) {\small $(6,0)$};
    \node[left] at (0,\s) {\small $(0,4)$};

\end{tikzpicture}
    \caption{\small A Ferrers diagram in the case of $N=10$ particles of which $m=4$ is slow, bounded by the red down-right curve $\gamma$. The vertices of the positive quadrant covered by the shaded area show that starting from the reverse-speed configuration, which overtakes have already happened: the number of slow particles overtook by the $i$th fast particle (counted from the contact point) is the number of vertices of the form $(i, j)$ not above $\gamma$. Hence this configuration corresponds to the fast ($F$) and slow ($S$) particles ordered as $SFFFSFSFSF$  The $R-D$ string encoding $\gamma$ is $RDRDRDRRRD$, obtained by reflecting the $S-F$ string and replacing the symbols as $F\mapsto R, S\mapsto D$. The $DR$-corners of $\gamma$ are indicated by red dots, while the sites in $\gamma^{+}$ -- corresponding the next possible overtakes -- are indicated by green crosses.} 
    \label{fig:ferrers}
\end{figure}

We call consecutive columns of the Ferrers diagram a column block if they
have the same height, and define row blocks analogously. These correspond to contiguous blocks of fast/slow particles in $\sigma$.

Our first important lemma towards proving \Cref{thm:fast-fast_slow-slow_makes_no_difference} is a simple consequence of \cite[Theorem~3.1]{BatesEmrahMartinSeppalainenSorensen2026}

\begin{lemma}[LPP invariance]\label{lem:exponential-block}
Consider an LPP in the vertices of the positive quadrant with parameters
$\tau_{(a,b)}\sim\Exp(x_a+y_b)$, where $x_a+y_b>0$ for every $a,b$.  The law of $T_\sigma^*$ is
invariant under permutations of the $x$-parameters in a column block and of
the $y$-parameters in a row block.
\end{lemma}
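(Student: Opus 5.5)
The plan is to reduce the statement to a single adjacent transposition and then apply \cite[Theorem~3.1]{BatesEmrahMartinSeppalainenSorensen2026} locally. Permutations within a column block are generated by adjacent transpositions $x_a\leftrightarrow x_{a+1}$ with columns $a,a+1$ in the same block. The row statement follows by the symmetry $(a,b)\mapsto(b,a)$, $x\leftrightarrow y$. This symmetry reflects the Ferrers diagram, preserves the form of \eqref{eq:boundary-lpp}, and turns row blocks into column blocks. So it suffices to show that $T^*_\sigma$ has the same law after swapping $x_a$ and $x_{a+1}$ for two neighbouring columns of common height $h$.

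\emph{Step 1: rewrite $T_\sigma^*$ as a sweep over columns.} Set $q=(N-m,m)$. I would view $T^*_\sigma=\max_{u\in\gamma^+_\sigma}L_u^q$ as a last-passage time from the region above $\gamma_\sigma$, computed column by column from left to right. For column $c$, let $G_c(j)$ denote the maximal weight of a path that starts at some $u\in\gamma^+_\sigma$ and ends at $(c,j)$, for $j>h_c$, where $h_c$ is the height of column $c$. Then $G_c$ is obtained from $G_{c-1}$ and the weights of column $c$ by the recursion $G_c(j)=\max\{G_{c-1}(j),G_c(j-1)\}+\tau_{(c,j)}$. At a $DR$-corner the recursion is started with the value $0$, and $T_\sigma^*=G_{N-m}(m)$. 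Inside a column block of height $h$, only the leftmost column can contain a point of $\gamma^+_\sigma$, namely at row $h+1$. Hence for columns $a,a+1$ in the same block, both columns live on the rows $h+1,\dots,m$. Moreover, the pair $(G_a,G_{a+1})$ is produced from the input profile $G_{a-1}$ on these rows, together with the two columns of weights, by a fixed two-column LPP map. If $a$ is the first column of the block, the input profile is the one coming from the corner, with value $0$ at row $h+1$ and $-\infty$ below. Everything to the right of column $a+1$ depends on the left part only through $G_{a+1}$.

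\emph{Step 2: the local swap.} Condition on all weights outside columns $a,a+1$; this fixes the input profile $G_{a-1}$ and the subsequent map from $G_{a+1}$ to $T^*_\sigma$. It then suffices to show that the output profile $G_{a+1}(h+1),\dots,G_{a+1}(m)$ has the same conditional law whether columns $a,a+1$ carry rates $(x_a+y_j,\,x_{a+1}+y_j)_j$ or $(x_{a+1}+y_j,\,x_a+y_j)_j$. I expect this to be exactly the content of \cite[Theorem~3.1]{BatesEmrahMartinSeppalainenSorensen2026}: a Burke-type intertwining stating that passing an arbitrary boundary profile through two exponential columns over the same set of rows gives an output whose law is symmetric in the two column parameters. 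Taking expectations over the conditioning then proves invariance under the transposition.

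\emph{Main obstacle.} The delicate point is matching our setting to the hypotheses of the cited theorem. The theorem is presumably stated for point-to-point LPP from a corner, or for general boundary data on a common set of rows. I must check that the columns really start at the same row, which is exactly where the equal-height (column block) assumption enters. I must also check that the corner start at $(a',h+1)$, with $a'$ the first column of the block, can be encoded as a legitimate boundary profile: value $0$ at row $h+1$ and $-\infty$ elsewhere, or equivalently a limit of large negative values. If the theorem only covers point-to-point passage times, I would first apply it to $L_u^{(c,j)}$ for each fixed $u$ and endpoint, and then upgrade to joint laws over endpoints. This upgrade uses the fact that the invariance in the cited result is at the level of the whole process along the exit boundary, which is what Step 1 needs.
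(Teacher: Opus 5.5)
Your proof is correct. It rests on the same two ingredients as the paper's proof, the cited Theorem~3.1 and the boundary representation \eqref{eq:boundary-lpp}, but organizes them differently.

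The paper applies the theorem once to the whole block $[a,a']$. Within the block only column $a$ carries a point of $\gamma_\sigma^+$, so every $u\in\gamma_\sigma^+$ has $u_1\le a$ or $u_1>a'$, while $q_1=N-m\ge a'$. Hence the joint law of $\{L_u^q\}_{u\in\gamma_\sigma^+}$, and so of its maximum, is unchanged. This needs no conditioning and no reduction to transpositions.

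You instead localize to an adjacent pair of columns. After conditioning on the outside weights, you need only that $\{L_{(a,i)}^{(a+1,j)}\}_{h<i\le j\le m}$ has a law symmetric in $x_a,x_{a+1}$. Since $G_{a+1}(j)=\max_{i\le j}\bigl(g(i)+L_{(a,i)}^{(a+1,j)}\bigr)$, this gives your Step~2 for every input profile $g$. Entries equal to $-\infty$ are harmless in this formula, so your worry about encoding the corner start is moot. The two-column fact is the crossing case of the cited theorem, essentially a customer-inhomogeneous form of Weber's interchangeability of $\cdot/M/1$ queues in series. Your route therefore trades a weaker, more elementary input for some sweep-and-conditioning bookkeeping. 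It also shows clearly where the block hypothesis is used: a point of $\gamma_\sigma^+$ in column $a+1$ would produce paths confined to column $a+1$, whose law is not symmetric under the swap.

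Two minor points:
\begin{itemize}
\item The corner input profile should be $0$ at row $h+1$, $-\infty$ on rows $h+2,\dots,h_{a-1}$ (with $h_0:=m$), and $G_{a-1}$ above. It is not ``$-\infty$ below'' row $h+1$.
\item Your fallback of upgrading point-to-point invariance to joint invariance over endpoints would not work by itself, since marginals do not determine joint laws. You do not need it, because the theorem is already a statement about joint laws.
\end{itemize}
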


\begin{proof}
    \cite[Theorem~3.1]{BatesEmrahMartinSeppalainenSorensen2026} tells us that in the setup of the Lemma, permuting column parameters in an interval $[a, a']$ of columns does not change the joint law of passage times $\{L_p^q:p_1, q_1\notin (a, a')\}$. However, inspecting \eqref{eq:boundary-lpp} this implies the invariance of $T_\sigma^*$ for such permutations of the parameters.
\end{proof}

To conclude this section, we make the following simple observation, for
which we extend the notation $T_\sigma^*$ to arbitrary, possibly
non-admissible configurations $\sigma$: it denotes the absorption time of
the suppressed model started from $\sigma$.

\begin{lemma}[Frozen sorting and particle blocks]
\label{lemma:in_block_exchange_invariance}
In the suppressed model, the law of $T_\sigma^*$ is invariant under
permuting labels inside any block of fast particles or inside any block of
slow particles.
\end{lemma}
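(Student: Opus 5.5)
Since both $\sigma$ and the permuted configuration $\sigma'$ are in general non-admissible, the plan is to reduce to the LPP setting via a relabeling that makes the configuration admissible. In the suppressed model only fast--slow interchanges occur, and their rates depend only on the speeds $u_a$, $w_b$ of the particles involved. The dynamics is therefore determined by the $F$/$S$ pattern of $\sigma$ together with the assignment of speeds to positions.

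\textbf{Step 1: relabeling.} Given an arbitrary $\sigma$ with $N-m$ fast and $m$ slow particles, order the fast particles by position from right to left and the slow ones from left to right. Call them ``fast particle $a$'' and ``slow particle $b$'', and assign them the parameters $x_a=u_a$ and $y_b=-w_b$, where now $u_a$, $w_b$ are the speeds these particles actually carry in $\sigma$. Because of the suppression, the relative order of fast particles is frozen, and so is that of slow particles. Fast particle $a$ must pass exactly the slow particles to its right, in increasing order of $b$.

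\textbf{Step 2: LPP representation.} Repeating the derivation of \eqref{eq:lpp-recursion} verbatim gives $T^*_\sigma=\max_{u\in\gamma^+_\sigma}L_u^{(N-m,m)}$ as in \eqref{eq:boundary-lpp}, with weights $\tau_{(a,b)}\sim\Exp(x_a+y_b)$. Here $\gamma_\sigma$ is read off from the $F$/$S$ string of $\sigma$, exactly as in \Cref{fig:ferrers}. This works because the reverse-speed ordering was never used in the derivation except to name particles, and the condition $x_a+y_b>0$ holds since fast speeds exceed slow speeds. The key point is that the Ferrers-diagram geometry depends only on the $F$/$S$ string, while the particle speeds enter only through the parameter vectors $(x_a)$ and $(y_b)$.

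\textbf{Step 3: applying \Cref{lem:exponential-block}.} A contiguous block of fast particles in $\sigma$ corresponds to a set of consecutive columns of equal height, i.e.\ a column block of $\gamma_\sigma$. Similarly, a block of slow particles corresponds to a row block. Permuting labels (speeds) inside such a particle block leaves the $F$/$S$ string, and hence $\gamma_\sigma$ and $\gamma_\sigma^+$, unchanged. It only permutes the $x$-parameters within that column block, or the $y$-parameters within that row block. \Cref{lem:exponential-block}, whose proof uses only \eqref{eq:boundary-lpp} and \cite[Theorem~3.1]{BatesEmrahMartinSeppalainenSorensen2026}, then gives $T^*_\sigma\stackrel{d}{=}T^*_{\sigma'}$.

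\textbf{Main obstacle.} The one point needing care is that \Cref{lem:exponential-block} is stated for admissible $\sigma$. I would check that its proof never uses admissibility. It needs only the equal heights of the permuted columns, so that both endpoints $p_1,q_1$ of every relevant passage time $L_u^{(N-m,m)}$, $u\in\gamma^+_\sigma$, lie outside the open interval $(a,a')$ of permuted columns. This holds because no $DR$-corner lies strictly inside a block of equal-height columns, and the terminal point $N-m$ is outside the block as well. Rows are handled symmetrically.
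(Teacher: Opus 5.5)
Your proposal is correct and is essentially the paper's proof: the $F$/$S$ projection of $\sigma$ fixes the Ferrers diagram and $\gamma_\sigma^+$, and permuting labels inside a particle block only permutes the $x$-parameters of a column block (or the $y$-parameters of a row block), so Lemma~\ref{lem:exponential-block} applies. Your explicit relabeling for non-admissible $\sigma$ and your endpoint check are details the paper leaves implicit; in the endpoint check, the precise point is that each $u\in\gamma_\sigma^+$ lies one column to the right of its $DR$-corner, so it can only land on the \emph{first} column of a block and never inside the open interval $(a,a')$.
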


\begin{proof}
For an arbitrary configuration $\sigma$, its fast--slow projection
determines the corresponding Young diagram, while the order of the fast
and slow labels determines the assignment of the $x$-parameters to its
columns and of the $y$-parameters to its rows. Thus permuting labels inside
a particle block amounts precisely to permuting the corresponding
$x$-parameters in a column block or the $y$-parameters in a row block.
The claim follows from Lemma~\ref{lem:exponential-block}.
\end{proof}

\section{Removing suppression}

First we allow deterministic fast-fast and slow-slow overtakes. Let $\Sigma=\{(t_1,i_1, j_1), \dots, (t_s,i_s, j_s)\}$ be a scenario of such overtakes: $t_1<t_2<\dots<t_q$, $i_\alpha, j_\alpha$ are labels of a pair of fast (resp. slow) particles, and at time $t_\alpha$, $i_\alpha$ and $j_\alpha$ are interchanged if they are neighbors at the time. The corresponding sorting time is denoted by $T_{\sigma}^{*,\Sigma}$. To formalize the proof of the following lemma, we note that after fixing an arbitrary reference state, i.e., particle configuration,  we can encode all states with elements of $S_N$, i.e., permutations of $\{1, 2, \dots, N\}$. 

\begin{lemma} \label{lemma:deterministic_scenario_independence}
    $T_{\sigma}^{*,\Sigma}\stackrel{d}{=} T_{\sigma}^*$ for any finite $\Sigma$ and initial condition $\sigma\in S_N$.
\end{lemma}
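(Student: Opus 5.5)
We argue by induction on the number $s$ of deterministic overtakes in $\Sigma$, proving the statement simultaneously for all initial conditions $\sigma\in S_N$. For $s=0$ there is nothing to prove. Assume the claim holds for all scenarios with fewer than $s$ overtakes, and let $\Sigma=\{(t_1,i_1,j_1),\dots,(t_s,i_s,j_s)\}$. The idea is to condition on the state of the suppressed dynamics at time $t_1$ and use the strong Markov property together with the time-homogeneity of the suppressed model. Write $\Sigma'$ for the scenario $\{(t_\alpha-t_1,i_\alpha,j_\alpha)\}_{\alpha\geq 2}$, shifted to start at time $0$, which has $s-1$ overtakes.

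On the event $\{T^*_\sigma\le t_1\}$ both processes have already been absorbed by time $t_1$. Here we use that the sorting time is the first time the fast--slow projection is sorted, and that fast--fast or slow--slow swaps cannot undo this. Hence $T^{*,\Sigma}_\sigma=T^*_\sigma$ on this event. On the complementary event, the two processes agree up to time $t_1$, and they are in the same state $\eta=\eta_{t_1}$. At $t_1$ the perturbed process jumps to $\eta'$, obtained from $\eta$ by swapping $i_1,j_1$ if they are adjacent; otherwise $\eta'=\eta$. By the Markov property, conditionally on $\mathcal F_{t_1}$ and this event,
\[
T^{*,\Sigma}_\sigma-t_1\stackrel{d}{=}T^{*,\Sigma'}_{\eta'}\stackrel{d}{=}T^*_{\eta'},
\]
where the second identity is the induction hypothesis applied to $\eta'$. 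Meanwhile $T^*_\sigma-t_1\stackrel{d}{=}T^*_{\eta}$.

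The key step is therefore $T^*_{\eta'}\stackrel{d}{=}T^*_{\eta}$ whenever $\eta'$ is obtained from $\eta$ by swapping two adjacent fast particles, or two adjacent slow particles. Adjacent particles of the same class lie in the same block of fast (resp.\ slow) particles of $\eta$. So $\eta'$ is obtained by permuting labels inside a block, and \Cref{lemma:in_block_exchange_invariance} gives the equality in law. Note that $\eta$ need not be admissible for the original reverse-speed start, because earlier deterministic swaps already scrambled the order. This is exactly why the induction is run over arbitrary $\sigma\in S_N$, and why \Cref{lemma:in_block_exchange_invariance} was stated for possibly non-admissible configurations.

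Integrating over $\eta$ then shows that the conditional laws of $T^{*,\Sigma}_\sigma$ and $T^*_\sigma$ given $\mathcal F_{t_1}$ coincide, and hence $T^{*,\Sigma}_\sigma\stackrel{d}{=}T^*_\sigma$. The main obstacle is making sure \Cref{lemma:in_block_exchange_invariance} really applies to non-admissible $\eta$. For such $\eta$, the suppressed model still has a Young-diagram description in which columns carry the current fast labels and rows the current slow labels. Its absorption time is again given by \eqref{eq:boundary-lpp}, with parameters assigned according to $\eta$. I would verify that the LPP derivation \eqref{eq:lpp-recursion} only uses the frozen relative order within each class, not admissibility. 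If needed, I would reformulate everything with $\eta$ playing the role of the reference ordering.
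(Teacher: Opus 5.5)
Your proposal is correct and follows the paper's proof essentially step for step. Both arguments induct on $|\Sigma|$ simultaneously over all initial states, split on whether absorption happens before $t_1$, apply the Markov property and the induction hypothesis to the post-swap state at $t_1$, and then use \Cref{lemma:in_block_exchange_invariance} to undo the same-class swap (the obstacle you flag about non-admissible $\eta$ is already covered by that lemma, which the paper states for arbitrary configurations, and your time-shifted $\Sigma'$ is just cleaner bookkeeping for the paper's $\Sigma-(t_1,a_1,b_1)$).
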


\begin{proof}
    We do induction on $|\Sigma|$, for $|\Sigma|=0$ the statement is tautological. 
    
    Consider now $|\Sigma|=s$ above. Let $\sigma(t_1)\in S_N$ be the state at time $t_1$ right before the (potential) interchange at time $t_1$, and for $\sigma'\in S_N$, denote by $(a_1\leftrightarrow b_1)\circ \sigma'$ the state obtained by interchanging $a_1, b_1$, if they are adjacent.
    \begin{align*}
    T_{\sigma}^{*,\Sigma}&=\mathbf{1}_{T_{\sigma}^{*,\Sigma}<t_1} T_{\sigma}^{*,\Sigma}+\mathbf{1}_{T_{\sigma}^{*,\Sigma}>t_1}\sum_{\sigma'\in S_N}\mathbf{1}_{\sigma(t_1)=\sigma'}\left(t_1+T_{(a_1\leftrightarrow b_1) \circ \sigma'}^{*,\Sigma-(t_1, a_1, b_1)}\right) \\
    &\stackrel{d}{=}\mathbf{1}_{T_{\sigma}^*<t_1} T_{\sigma}^*+\mathbf{1}_{T_{\sigma}^*>t_1}\sum_{\sigma'\in S_N}\mathbf{1}_{\sigma(t_1)=\sigma'}\left(t_1+T_{(a_1\leftrightarrow b_1)^ \circ \sigma'}^*\right)
    \\ 
    &\stackrel{d}{=} \mathbf{1}_{T_{\sigma}^*<t_1} T_{\sigma}^*+\mathbf{1}_{T_{\sigma}^*>t_1}\sum_{\sigma'\in S_N}\mathbf{1}_{\sigma(t_1)=\sigma'}\left(t_1+T_{ \sigma'}^*\right) \\
    &\stackrel{d}{=} T_{\sigma}^*,
    \end{align*}
    where in the first $\stackrel{d}{=}$ we used the induction hypothesis and the fact that the conditional distribution of $T_{\sigma}^*$ and $T_{\sigma}^{*,\Sigma}$ coincides on stopping before $t_1$, 
    and in the second $\stackrel{d}{=}$, we used Lemma \ref{lemma:in_block_exchange_invariance}. This concludes the proof.
\end{proof}

\begin{proof}[Proof of \Cref{thm:fast-fast_slow-slow_makes_no_difference}]
    Initialize Poisson clocks of appropriate parameters for every pair of fast and every pair of slow particles. The ringing times of these give rise to a scenario in the sense of \Cref{lemma:deterministic_scenario_independence}. Applying it, we see that the conditional distribution of $T_\sigma$ with respect to any scenario is identical to the distribution of $T_{\sigma}^*$. Hence by Fubini, $T_\sigma \stackrel{d}{=} T_{\sigma}^*$.
\end{proof}

\begin{proof}[Proof of \Cref{thm:sorting_time}]
Now we return to our problem with $K$ types as $N\to \infty$. According to \Cref{thm:fast-fast_slow-slow_makes_no_difference}, for fixed $N$ the $m$-sorting 
time $T_{N, m}$ starting from reverse-speed configuration can be modeled by a last passage time from 
$(1, 1)$ to $\left((K-m)n, mn\right)$. A subtle difficulty concerning the asymptotics of its last passage time is that different $N$s 
correspond to a different environment. However by our findings we can use a single environment to model $T_{N, m}$ for every $N$:
set
\[
    x_a=v_{m+((a-1)\bmod (K-m))},
    \qquad
    y_b=-v_{m-1-((b-1)\bmod m)}.
\]
In this single environment, for every $K, n$ we have 
$L^{\left((K-m)n, mn\right)} \stackrel{d}{=} T_{N, m}$.

Invoking \cite[Theorem~3.2]{EJS2021}, we find that
$$\frac{L^{\left((K-m)n, mn\right)}}{N}\xrightarrow{a.s.} \frac{1}{K}\inf_{v_{m-1}<z<v_{m}}\left(\sum_{i=0}^{m-1} \frac{1}{z-v_i}+\sum_{i=m}^{K-1} \frac{1}{v_i-z}\right).$$
As this result is proved via summable tail bounds, we obtain \eqref{eq:main1} by the distributional identity we proved. Taking maximum for $m$ yields \eqref{eq:main2}.
\end{proof}

\section*{Declaration of generative AI use}

During the preparation of this manuscript, the authors used ChatGPT (GPT-5.6) to assist in generating TikZ code for \Cref{fig:ferrers} according to author-provided specifications and in proofreading the manuscript. The authors reviewed and edited all generated material and take full responsibility for the accuracy and content of the manuscript.

\printbibliography

\end{document}